\DeclareMathOperator\OO{\mathcal{O}}
\DeclareMathOperator\q{\mathfrak{q}}
\DeclareMathOperator\p{\mathfrak{p}}
\DeclareMathOperator\Spec{\textrm{Spec}}
\theoremstyle{plain}
\newtheorem{theorem}{Theorem}[section]
\newtheorem{lemma}[theorem]{Lemma}
\newtheorem{question}[theorem]{Question}
\newtheorem{notation}[theorem]{Notation}
\begin{document}

\title{A note on a sheaf of real regular functions}
\author{Shahram Mohsenipour}

\address{School of Mathematics, Institute for Research in Fundamental Sciences (IPM)
        P. O. Box 19395-5746, Tehran, Iran}\email{sh.mohsenipour@gmail.com}


\subjclass[2010]{14P99}
\keywords{Zariski topology, Affine schemes, Global sections}
\begin{abstract}
Coste and Roy in 1979 defined a structural sheaf on the real Zariski spectrum of a semi-real ring $A$ and asked whether the ring of the global sections is $\sum^{-1}_1 A$ where $\sum_1$ is the multiplicative subset $\{1+\sum_{i=1}^n a_i^2|a_i\in A, n\in\mathbb{N}\}$ of $A$. We give a positive answer to this question for real rings and integral domains.
\end{abstract}
\maketitle
\bibliographystyle{amsplain}

\section{Introduction}
By ring we mean a commutative with identity. The \emph{real Zariski spectrum} of $A$ denoted by $\Spec_R A$ is a topological space whose underlying set is the set of all real prime ideals of $A$ and its topology is described as follows. Let $I$ be an ideal of $A$, we define $V(I)$ as the set all real prime ideals $\mathfrak{p}$ of $A$ with $\mathfrak{p}\supseteq I$. We have
\begin{lemma}
(i) If $I$ and $J$ are two ideals of $A$, then $V(IJ)=V(I)\cup V(J)$.

(ii) If $\{I_{i}\}$ is any set of ideals of $A$, then $V(\sum I_i)=\bigcap V(I_i)$.

(iii) If $I$ and $J$ are two ideals, $V(I)\subseteq V(J)$ if and only if $\sqrt[R]{I}\supseteq\sqrt[R]{J}$.

\end{lemma}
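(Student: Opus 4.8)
The plan is to prove (i) and (ii) exactly as one does for the ordinary Zariski spectrum --- the arguments use only that the ideals $\mathfrak{p}$ in question are prime, not that they are real --- and to obtain (iii) from the description of $\sqrt[R]{I}$ as the intersection of the real prime ideals containing $I$.

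For (i), the inclusion $V(I)\cup V(J)\subseteq V(IJ)$ follows from $IJ\subseteq I\cap J\subseteq I$ and likewise $IJ\subseteq J$: a real prime containing $I$ or $J$ certainly contains $IJ$. For the reverse inclusion I would take $\mathfrak{p}\in V(IJ)$ with $I\not\subseteq\mathfrak{p}$, choose $a\in I\setminus\mathfrak{p}$, and note that for every $b\in J$ we have $ab\in IJ\subseteq\mathfrak{p}$, so $b\in\mathfrak{p}$ by primality; hence $J\subseteq\mathfrak{p}$ and $\mathfrak{p}\in V(I)\cup V(J)$. Part (ii) is immediate from the definition of $\sum I_i$: a prime $\mathfrak{p}$ contains $\sum I_i$ iff it contains every $I_i$, so $\mathfrak{p}\in V(\sum I_i)$ iff $\mathfrak{p}\in V(I_i)$ for all $i$ iff $\mathfrak{p}\in\bigcap V(I_i)$.

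For (iii), I would use that $\sqrt[R]{I}=\bigcap_{\mathfrak{p}\in V(I)}\mathfrak{p}$ (reading an empty intersection as $A$), so in particular $I\subseteq\sqrt[R]{I}$. If $V(I)\subseteq V(J)$, then intersecting over the larger family $V(J)$ yields a smaller ideal, i.e.\ $\sqrt[R]{J}=\bigcap_{\mathfrak{p}\in V(J)}\mathfrak{p}\subseteq\bigcap_{\mathfrak{p}\in V(I)}\mathfrak{p}=\sqrt[R]{I}$. Conversely, if $\sqrt[R]{I}\supseteq\sqrt[R]{J}$, then for any $\mathfrak{p}\in V(I)$ we get $\mathfrak{p}\supseteq\sqrt[R]{I}\supseteq\sqrt[R]{J}\supseteq J$, hence $\mathfrak{p}\in V(J)$; thus $V(I)\subseteq V(J)$.

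I do not anticipate a genuine obstacle here. The one place to be slightly careful is the step in (iii) from ``$\mathfrak{p}$ a real prime with $\mathfrak{p}\supseteq I$'' to ``$\mathfrak{p}\supseteq\sqrt[R]{I}$'': if $\sqrt[R]{I}$ is taken by definition to be the intersection of the real primes over $I$, this is a tautology, whereas if it is defined algebraically (as the set of $a$ with $a^{2m}+\sum b_i^2\in I$ for some $m\geq 1$) one needs the real Nullstellensatz identifying the two descriptions; in either case (iii) then follows formally from the easy inclusion $I\subseteq\sqrt[R]{I}$ together with the monotonicity of intersections.
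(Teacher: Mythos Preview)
Your argument is correct and matches the paper's own approach: the paper simply refers to Hartshorne's Lemma~II.2.1 for (i) and (ii) and to the real Nullstellensatz (Theorem~\ref{realnulls}) for (iii), which is exactly what you spell out. Your remark about the two possible definitions of $\sqrt[R]{I}$ and the role of the real Nullstellensatz in identifying them is precisely the point the paper's reference to Theorem~\ref{realnulls} is making.
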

\begin{proof}
The same as (\cite{hartshorne}, Lemma 2.1) and see Theorem \ref{realnulls} below for the real radical $\sqrt[R]{I}$.
\end{proof}
Note that the ring $A$ is semi-real iff $\Spec_R A\neq\emptyset$. Now take the subsets of the form $V(I)$ to be the closed subsets. This defines a topology on $\Spec_R A$ which we call the \emph{real Zariski topology} of $\Spec_R A$. Note that $V(A)=\emptyset$ and $V(\{0\})=\Spec_R A$. For any element $f\in A$, we denote by $D(f)$ the open complement of $V((f))$. It is easy to see that the open sets of the form $D(f)$ form a basis for the topology of $\Spec_R A$. An open set of the form $D(f)$ is called a principal open subset while its complement $V((f))$ is called a principal closed subset.

Now for a ring $A$, Coste and Roy defined a sheaf of rings $\mathcal{O}_{\Spec_R A}$ on the real Zariski spectrum $\Spec_R A$ as follows \cite{coste-roy}. For an open $U\subseteq\Spec_R A$, $\mathcal{O}_{\Spec_R A}(U)$ consists of all functions $s\colon U\rightarrow\coprod_{\mathfrak{p}\in\Spec_R A}\!A_{\mathfrak{p}}$, satisfying the following two conditions. Firstly for all $\mathfrak{p}\in U$ we have $s(\mathfrak{p})\in A_{\p}$, secondly for each $\p\in U$ there is a neighborhood $U_{\p}$ of $\p$ in $U$ and also $a,f\in A$ such that for all $\mathfrak{q}\in U_{\p}$ we have $f\notin\mathfrak{q}$ and $s(\q)=\frac{a}{f}\in A_{\q}$. For $V\subseteq U$ we define $\mathcal{O}_{\Spec_R A}(U)\rightarrow\mathcal{O}_{\Spec_R A}(V)$ by the restriction of functions. Let $X=\Spec_R A$ and $\p\in X$, it is easily seen that there is a canonical isomorphism $\mathcal{O}_{X,\p}\cong A_{\p}$. Though Coste and Roy had motivated from topos theoretic considerations, the sheaf $\mathcal{O}_{\Spec_R A}$ can be regarded as a generalization of affine irreducible real algebraic varieties in the sense of \cite{BCR} in the same vein that affine schemes generalize affine algebraic varieties.

We denote by $\sum_1$ the multiplicative subset $\{ 1+\sum_{i=1}^{n}a_i^{2}\,|a_i\in A, n\in\mathbb{N}\}$ of $A$. Also for a nonzero $f\in A$ we define the following multiplicative subset of $A$.
\[
\sum\nolimits_{f}=\big\{f^{2m}+\sum_{i=1}^{n}a_i^{2}\,|a_i\in A, m\in\mathbb{N}\cup\{0\},n\in\mathbb{N}\big\}.
\]
Obviously if $A$ is a semi-real ring, then $\sum_1\neq\emptyset$ and if $A$ a real ring, then $\sum_f\neq\emptyset$. Now we are able to state Coste and Roy's question.

\begin{question}[\cite{coste-roy}, Page 48]
Do we have $\OO_X(D(f))\cong\sum_f^{-1} A$?
\end{question}

We show in Theorem \ref{main} that the canonical homomorphism from $\sum_f^{-1} A$ into $\OO_X(D(f))$ is always injective and in the case of $A$ being an real ring or an integral domain is surjective. Hence an isomorphism. We don't know whether this is the case for every semi-real ring. We shall make several use of the abstract version of the real Nullstellensatz.

\begin{theorem}[\cite{BCR}, Proposition 4.1.7]\label{realnulls}
Let $A$ be a commutative ring and $I$ an ideal of $A$. Then
\[
\sqrt[R]{I}=\{a\in A\, |\, \exists m\in\mathbb{N}\,\, \exists b_1,\dots,b_p\in A\,\, a^{2m}+b_1^2+\cdots+b_p^2\in I\}
\]
is the smallest real ideal of $A$ containing $I$. The ideal $\sqrt[R]{I}$, called  \textsf{the real radical} of $I$, is the intersection of all real prime ideals containing $I$ (or is $A$ itself if there is no real prime ideal containing $I$).
\end{theorem}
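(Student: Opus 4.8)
Write $\sqrt[R]{I}$ for the set of $a\in A$ with $a^{2m}+b_1^2+\cdots+b_p^2\in I$ for some $m\in\mathbb{N}$ and $b_1,\dots,b_p\in A$. Rather than checking directly that this set is an ideal, my plan is to pin it down spectrally and read everything else off. I would prove: (i) $\sqrt[R]{I}\subseteq\p$ for every real prime $\p\supseteq I$; and (ii) if $a\notin\sqrt[R]{I}$ then some real prime $\p\supseteq I$ has $a\notin\p$, while if no real prime contains $I$ then $\sqrt[R]{I}=A$. Granting these, $\sqrt[R]{I}=\bigcap\{\p:\p\text{ a real prime},\ \p\supseteq I\}$ (the empty intersection being $A$); this is an ideal, it is real because an intersection of real ideals is real (a sum of squares lying in every member has each summand in every member), and it contains $I$. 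It is moreover the smallest real ideal containing $I$: a real ideal $J\supseteq I$ is radical (from $a^{2k}\in J$ one gets $a^{k}\in J$ since $(a^{k})^{2}$ is a single square in the real ideal $J$, and one descends), hence $J$ coincides with its own real radical, hence --- by (i) and (ii) applied to $J$ --- with the intersection of the real primes above $J$, which form a subfamily of those above $I$; so $\sqrt[R]{I}\subseteq J$.

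Step (i) is immediate: if $a^{2m}+\sum b_i^2\in I\subseteq\p$ with $\p$ real and prime, then $(a^m)^2+\sum b_i^2\in\p$ forces $a^m\in\p$ by realness and then $a\in\p$ by primality. For step (ii), assume $a\notin\sqrt[R]{I}$. Then $a$ is not nilpotent modulo $I$ (otherwise $a^n\in I$ gives $a^{2n}=a^{2n}+0\in I$, so $a\in\sqrt[R]{I}$), so $C:=(A/I)[1/\bar a]$ is a nonzero ring. Also $-1$ is not a sum of squares in $C$: if it were, clearing denominators would give $n$ and $d_i$ with $\bar a^{2n}+\sum\bar d_i^2=0$ in $C$, whence $\bar a^{2k}\bigl(\bar a^{2n}+\sum\bar d_i^2\bigr)=0$ in $A/I$ for some $k$, i.e.\ $a^{2(n+k)}+\sum(a^kd_i)^2\in I$, contradicting $a\notin\sqrt[R]{I}$.

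Thus (ii) comes down to the fact --- and this is where the real work is --- that a nonzero commutative ring $C$ with $-1\notin\sum C^2$ possesses a real prime ideal. I would prove this by Zorn's lemma: $1+\sum C^2$ is multiplicatively closed, since $(1+\sigma)(1+\sigma')=1+(\sigma+\sigma'+\sigma\sigma')$ and $\sigma+\sigma'+\sigma\sigma'$ is again a sum of squares, and it misses $0$ precisely because $-1\notin\sum C^2$; so there is an ideal $\p$ maximal among those disjoint from $1+\sum C^2$, and $\p$ is then prime. The delicate point is that $\p$ is real: if $c\notin\p$ and $c^2+\sigma\in\p$ for some sum of squares $\sigma$, maximality gives $y\in\p$, $z\in C$, $\sigma'\in\sum C^2$ with $y+zc=1+\sigma'$; squaring, the left side becomes $y^2+2yzc+z^2c^2\equiv z^2c^2\equiv-z^2\sigma\pmod\p$, while the right side equals $1+\sigma''$ with $\sigma''\in\sum C^2$, so $1+\sigma''+z^2\sigma\in\p$ with $\sigma''+z^2\sigma\in\sum C^2$, contradicting $\p\cap(1+\sum C^2)=\emptyset$. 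Hence $\p$ is real. Applying this to $C=(A/I)[1/\bar a]$ produces a real prime $\q$ of $C$; its contraction $\p$ to $A$ contains $I$, omits $a$, and is real because $A/\p$ is a subring of the real ring $C/\q$. Finally, the contrapositive applied to $C=A/I$ shows that if $A/I$ has no real prime then $-1\in\sum(A/I)^2$, i.e.\ $1+\sum b_i^2\in I$ for some $b_i$, so $1\in\sqrt[R]{I}$ and $\sqrt[R]{I}=A$. This establishes (ii), completing the argument.
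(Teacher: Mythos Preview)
The paper does not give its own proof of this theorem; it is quoted from the cited reference (Bochnak--Coste--Roy, Proposition~4.1.7) and used as a black box throughout. Your argument is correct and self-contained: step~(i) is the easy containment, and for step~(ii) the passage to $C=(A/I)[1/\bar a]$ together with the Zorn argument on ideals disjoint from $1+\sum C^{2}$ is exactly the standard mechanism for producing a real prime avoiding $a$; the verification that the maximal such ideal is real is carried out cleanly.

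One minor simplification you may wish to make: once you have shown that a real ideal $J\supseteq I$ is radical, the inclusion $\sqrt[R]{I}\subseteq J$ follows immediately from the defining formula for $\sqrt[R]{I}$ --- if $a^{2m}+\sum b_i^{2}\in I\subseteq J$, realness of $J$ gives $a^{m}\in J$, and then radicality gives $a\in J$ --- so there is no need to reinvoke (i) and (ii) for $J$ and pass through the real primes above $J$.
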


\begin{notation}
\emph{For the sake simplicity we often denote a sum of squares by $\sum x^{2}$,$\sum y^{2}$ and the like. When we say that $\sum x^{2}$ is a sum of square in $A$, we mean that there are $n\in\mathbb{N}$, $a_{1},\dots,a_{n}$ in $A$ such that $\sum x^{2}=\sum_{i=1}^{n}a_i^{2}$. In order to distinguish sums of squares we sometimes use superscripts, e.g., $\sum^{1}x^{2},\sum^{2}x^{2},\dots,\sum^{i}x^{2}$. Also when there is no danger of confusion we use expressions such as $f^{2}\sum x^{2}=\sum (xf)^{2}$.}
\end{notation}

\section{The ring of global sections}

\begin{lemma}
For any $f\in A$, $D(f)$ is quasi-compact.
\end{lemma}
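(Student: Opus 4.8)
The plan is to transcribe the classical proof that a principal open subset of an affine scheme is quasi-compact, using the real Nullstellensatz (Theorem~\ref{realnulls}) in place of the ordinary one. Since the sets $D(g)$, $g\in A$, form a basis of the real Zariski topology, it suffices to prove that whenever $D(f)\subseteq\bigcup_{i\in J}D(g_i)$ for some family $(g_i)_{i\in J}$ in $A$, there is a finite subset $F\subseteq J$ with $D(f)\subseteq\bigcup_{i\in F}D(g_i)$: an arbitrary open cover of $D(f)$ is refined by a cover by basic open sets, and a finite subcover of the latter yields one of the former.

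So assume $D(f)\subseteq\bigcup_{i\in J}D(g_i)$. Passing to complements and using the stated properties of $V(\cdot)$, this is equivalent to $V\!\big(\sum_{i\in J}(g_i)\big)\subseteq V((f))$, hence to $f\in\sqrt[R]{\sum_{i\in J}(g_i)}$ (equivalently: every real prime ideal containing all the $g_i$ contains $f$, with the harmless convention that the radical is all of $A$ if no real prime contains the $g_i$). By Theorem~\ref{realnulls} there are $m\in\mathbb{N}$ and $b_1,\dots,b_p\in A$ with
\[
f^{2m}+b_1^2+\cdots+b_p^2\in\sum_{i\in J}(g_i).
\]
A member of the ideal $\sum_{i\in J}(g_i)$ is a finite $A$-linear combination of the $g_i$, so there exist a finite $F\subseteq J$ and elements $c_i\in A$ for $i\in F$ with $f^{2m}+b_1^2+\cdots+b_p^2=\sum_{i\in F}c_ig_i$.

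It remains to check that $D(f)\subseteq\bigcup_{i\in F}D(g_i)$. Let $\p\in D(f)$, so $f\notin\p$. If $g_i\in\p$ for every $i\in F$, then $\sum_{i\in F}c_ig_i\in\p$, so $f^{2m}+b_1^2+\cdots+b_p^2\in\p$; since $\p$ is a real ideal, every summand of this sum of squares lies in $\p$, in particular $f^{2m}\in\p$, whence $f\in\p$ because $\p$ is prime --- contradicting $\p\in D(f)$. Therefore $g_i\notin\p$ for some $i\in F$, i.e.\ $\p\in\bigcup_{i\in F}D(g_i)$, giving the desired finite subcover. (If $f\in\sqrt[R]{(0)}$ then $D(f)=\emptyset$ and there is nothing to prove, but the argument makes no case distinction.) The proof is routine; the only departures from the classical case are the use of Theorem~\ref{realnulls} and the elementary fact that a real ideal containing a sum of squares contains each of its summands, so I do not foresee any serious obstacle.
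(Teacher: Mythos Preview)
Your proof is correct and follows essentially the same route as the paper's: reduce to a basic open cover, apply Theorem~\ref{realnulls} to write $f^{2m}+\sum b_j^2$ as a finite $A$-linear combination of the $g_i$, and extract the finite subcover. The only cosmetic difference is the final verification---the paper uses the chain $D(f)\subseteq D(f^{2m}+\sum x^2)=D\big(\sum_{i\in F}c_ig_i\big)\subseteq\bigcup_{i\in F}D(g_i)$, whereas you argue pointwise via the defining property of a real prime ideal.
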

\begin{proof}
Let $D(f_i), i\in I$ be a open cover of $D(f)$, then $V((f))=\bigcap_{i\in I}V((f_i))=V(\sum_{i\in I}(f_i))$. So $f\in\sqrt[R]{\sum_{i\in I}(f_i)}$ and it follows that there are $a_1,\dots,a_k$ in $A$ and $f_{i_1},\dots,f_{i_k}$ such that $a_1f_{i_1}+\cdots+a_kf_{i_k}=f^{2m}+\sum x^2$ for some $m\in\mathbb{N}$. Therefore
\[
D(f)\subseteq D(f^{2m}+\sum x^2)=D(a_1f_{i_1}+\cdots+a_kf_{i_k})\subseteq D(f_{i_1})\cup\cdots\cup D(f_{i_k}),
\]
thus $D(f)$ has a finite subcover.
\end{proof}
Now we will need the following useful lemma.
\begin{lemma}[{\bf Basic Lemma}]\label{basic}
Let $X=\Spec_R A$, $f\in A$ and $s\in\OO_X(D(f))$, then there are $g_1,\dots,g_n\in A$ such that $D(f)=D(g_1)\cup\cdots\cup D(g_n)$ and $s\upharpoonright D(g_i)=\frac{a_i}{g_i}$ for some $a_i\in A$.
\end{lemma}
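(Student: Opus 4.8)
The plan is to adapt the classical computation of the global sections of an affine structure sheaf (cf.\ \cite[Prop.~II.2.2]{hartshorne}) to the real Zariski spectrum, using the quasi-compactness of $D(f)$ established just above together with the fact that the principal opens form a basis of the topology. Fix $\p\in D(f)$. Unravelling the definition of $\OO_X$, there is an (open) neighborhood $U_\p$ of $\p$ in $D(f)$ together with elements $a,h\in A$ such that $h\notin\q$ and $s(\q)=a/h$ in $A_\q$ for every $\q\in U_\p$. Since the principal open sets $D(g)$, $g\in A$, form a basis of $X$, hence of the subspace $D(f)$, I may choose $h'\in A$ with $\p\in D(h')\subseteq U_\p$. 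The requirement $h\notin\q$ for all $\q\in U_\p$ says precisely that $U_\p\subseteq D(h)$, whence $D(h')\subseteq D(h)$.

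The crucial point is the choice of the localizing element. I would set $g_\p:=h h'$. By part~(i) of the first Lemma (applied to $(h h')=(h)(h')$) we get $D(g_\p)=D(h)\cap D(h')=D(h')$, using $D(h')\subseteq D(h)$; in particular $\p\in D(g_\p)$. Moreover every $\q\in D(g_\p)=D(h')$ lies in $U_\p$ and satisfies $h,h'\notin\q$, so
\[
s(\q)=\frac{a}{h}=\frac{a h'}{h h'}=\frac{a h'}{g_\p}\qquad\text{in }A_\q,
\]
the middle equality holding because $a h'\cdot h-h h'\cdot a=0$ in $A$. Thus $s\upharpoonright D(g_\p)=\tfrac{a h'}{g_\p}$. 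It is instructive to see why the two obvious candidates do not work: over $D(h)$ we have no information about $s$ (only over the possibly smaller set $U_\p$), while over $D(h')$ we have no representation of $s$ as a fraction with denominator $h'$; passing to the product $h h'$ simultaneously shrinks the open set down to $D(h')$ and keeps $s$ expressible with that denominator.

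To conclude, the sets $D(g_\p)$ with $\p$ ranging over $D(f)$ cover $D(f)$, so by quasi-compactness of $D(f)$ finitely many of them, say $D(g_{\p_1}),\dots,D(g_{\p_n})$, already cover; setting $g_i:=g_{\p_i}$ and $a_i:=a_{\p_i}h'_{\p_i}$ yields the claim. I do not anticipate a genuine difficulty: the only mildly delicate steps are the reduction to a principal-open neighborhood sitting inside $U_\p$ and the verification that $a/h$ and $a h'/g_\p$ agree as sections over the whole of $D(g_\p)$ rather than just at $\p$, and both are routine. I also note that, unlike the classical argument which at this stage invokes the Nullstellensatz to write $h'^{\,k}=ch$, the device above uses no form of the real Nullstellensatz; everything required (the basis of principal opens, the behaviour of $V$ under products, and the quasi-compactness of $D(f)$) has already been established.
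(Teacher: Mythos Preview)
Your argument is correct, and it is genuinely simpler than the paper's. The paper proceeds as follows: having chosen a basic open $D(h_i)\subseteq D(f_i)$ (in your notation $D(h')\subseteq D(h)$), it invokes the real Nullstellensatz (Theorem~\ref{realnulls}) to produce a relation $h_i^{2n_i}+\sum^{i}x^2=u_if_i$, and then sets
\[
g_i \;=\; h_i^{2}\bigl(h_i^{2n_i}+\sum\nolimits^{i}x^2\bigr),
\qquad a_i \;=\; u_ib_ih_i^{2},
\]
checking that $D(g_i)=D(h_i)$ and that $s\upharpoonright D(g_i)=a_i/g_i$. Your device of taking $g_\p=hh'$ achieves the same two goals---$D(g_\p)=D(h')$ and $s\upharpoonright D(g_\p)=ah'/g_\p$---by a one-line computation, and, as you observe, bypasses the real Nullstellensatz entirely. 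The paper's choice has the side effect that each $g_i$ lies in $\Sigma_{h_i}$ (it equals $h_i^{2(n_i+1)}+\sum(h_ix)^2$), which is aesthetically consonant with the localizations $\Sigma_f^{-1}A$ appearing later; however, this extra structure is not used in the proof of Theorem~\ref{main}, so nothing is lost by your shortcut.
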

\begin{proof}
Let $D(f)=\{\p_i|i\in I\}$, then each $\p_i$ has a neighborhood $D(h_i)\subseteq D(f)$ such that $s\upharpoonright D(h_i)=\frac{b_i}{f_i}$ so that for all $\q\in D(h_i)$ we have $f_i\notin\q$. This means that $D(h_i)\subseteq D(f_i)$ for $i\in I$ which implies that $h_i^{2n_i}+\sum^{i}x^2=u_i f_i$ for some $n_i \in \mathbb{N}$ and $\sum^i x^2, u_i\in A$. Thus $s\upharpoonright D(h_i)=\frac{b_i}{f_i}=\frac{u_ib_i}{h_i^{2n_i}+\sum^{i}x^2}$. Now set $a_i=u_ib_ih_i^2$ and $g_i=h_i^2(h_i^{2n_i}+\sum^{i}x^2)$, so $s\upharpoonright D(h_i)=\frac{a_i}{g_i}$. Observing that
\[
D(g_i)=D(h_i^2)\cap D(h_i^{2n_i}+\sum\nolimits^{i}x^2)=D(h_i),
\]
we get $s\upharpoonright D(g_i)=\frac{a_i}{g_i}$. Also $\bigcup_{i\in I}D(g_i)=\bigcup_{i\in I}D(h_i)=D(f)$, then by quasi-compactness we can choose finitely many of $D(g_i)$ covering $D(f)$.
\end{proof}
\begin{theorem}\label{main}
Let $A$ be an real ring or an integral domain, $f\in A$ and let $X=\Spec_R A$, then $\OO_X(D(f))\cong\sum_{f}^{-1} A$. So taking $f=1$ we get
\[
\OO_X(D(1))=\OO_{X}(\Spec_R A)\cong\sum\nolimits_{1}^{-1} A.
\]
\end{theorem}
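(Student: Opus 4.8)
The plan is to prove that the canonical ring map $\psi\colon\sum\nolimits_f^{-1}A\to\OO_X(D(f))$, $a/s\mapsto\bigl(\p\mapsto a/s\in A_\p\bigr)$, is well defined and bijective. Well-definedness is immediate: if $\p\in D(f)$ and $s=f^{2m}+\sum x^2\in\Sigma_f$, then $s\notin\p$, since $s\in\p$ would force $f^{m}\in\p$ ($\p$ being real), hence $f\in\p$, contrary to $\p\in D(f)$; thus $s$ is a unit in $A_\p$ and $a/s$ is an abstract real regular function on $D(f)$, and $\psi$ is clearly a ring homomorphism. For injectivity, if $\psi(a/s)=0$ then $a/1=0$ in $A_\p$ for every $\p\in D(f)$, so $\operatorname{Ann}(a)\not\subseteq\p$; equivalently every real prime containing $\operatorname{Ann}(a)$ contains $f$, i.e.\ $f\in\sqrt[R]{\operatorname{Ann}(a)}$. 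By Theorem~\ref{realnulls} there are $m$ and a sum of squares with $f^{2m}+\sum x^2\in\operatorname{Ann}(a)$; since $f^{2m}+\sum x^2\in\Sigma_f$, this gives $a/s=0$. (Neither of these two steps uses that $A$ is real.)

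The substance is surjectivity, and this is where realness enters. Given $s\in\OO_X(D(f))$, the Basic Lemma furnishes a finite cover $D(f)=D(g_1)\cup\cdots\cup D(g_n)$ with $s\upharpoonright D(g_i)=a_i/g_i$. On $D(g_ig_j)$ both $a_i/g_i$ and $a_j/g_j$ restrict to $s$, so for each $\p\in D(g_ig_j)$ we have $a_i/g_i=a_j/g_j$ in $A_\p$, i.e.\ $\operatorname{Ann}(a_ig_j-a_jg_i)\not\subseteq\p$; hence every real prime containing $\operatorname{Ann}(a_ig_j-a_jg_i)$ contains $g_ig_j$, so $g_ig_j\in\sqrt[R]{\operatorname{Ann}(a_ig_j-a_jg_i)}$, and Theorem~\ref{realnulls} produces $m_{ij}$ and a sum of squares with $\bigl((g_ig_j)^{2m_{ij}}+\sum x^2\bigr)(a_ig_j-a_jg_i)=0$ (for a pair with $D(g_ig_j)=\emptyset$ one applies Theorem~\ref{realnulls} to the ideal $(0)$, which, as $A$ is real, forces a power of $g_ig_j$ to vanish, so the same relation holds with $\sum x^2=0$). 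The key move is now to multiply this relation through by $a_ig_j-a_jg_i$: the left side becomes $\bigl((g_ig_j)^{m_{ij}}(a_ig_j-a_jg_i)\bigr)^2+\sum\bigl(x(a_ig_j-a_jg_i)\bigr)^2$, a sum of squares equal to $0$; since $A$ is real every summand vanishes, so in particular $(g_ig_j)^{m_{ij}}(a_ig_j-a_jg_i)=0$.

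From this point I would carry out the classical computation (cf.\ \cite{hartshorne}, II.2.2). Choose $m\ge 1$ with $(g_ig_j)^{m}(a_ig_j-a_jg_i)=0$ for all $i,j$, and replace each $g_i$ by $g_i^{m+1}$ and each $a_i$ by $a_ig_i^{m}$; this alters neither $D(g_i)$ nor the section $a_i/g_i$ on it, and for the new data one has $a_ig_j=a_jg_i$ for all $i,j$. Since $D(f)=\bigcup_iD(g_i)$ still holds, Theorem~\ref{realnulls} gives $k$, elements $b_i\in A$ and a sum of squares with $\lambda:=f^{2k}+\sum y^2=\sum_ib_ig_i\in\Sigma_f$; put $a:=\sum_ib_ia_i$. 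Then for each $j$, $g_ja=\sum_ib_ia_ig_j=\sum_ib_ia_jg_i=\lambda a_j$, so $a/\lambda=a_j/g_j=s$ on each $D(g_j)$, hence $\psi(a/\lambda)=s$ on $D(f)$. The main obstacle is exactly the key move of the previous paragraph: for an arbitrary semi-real ring one is stuck with a relation $\bigl((g_ig_j)^{2m_{ij}}+\sum x^2\bigr)(a_ig_j-a_jg_i)=0$ carrying a genuine sum-of-squares term, which does not factor through a power of $g_ig_j$ and so obstructs the usual clearing of denominators; I see no way around this without the hypothesis that $A$ is real.
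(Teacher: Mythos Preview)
Your proof is correct and follows essentially the same route as the paper's: the same canonical map $\psi$, the same annihilator/real-radical argument for injectivity, the Basic Lemma followed by the classical Hartshorne-style gluing for surjectivity. Your ``key move'' of multiplying by $a_ig_j-a_jg_i$ to turn $\bigl((g_ig_j)^{2m_{ij}}+\sum x^2\bigr)(a_ig_j-a_jg_i)=0$ into a vanishing sum of squares is exactly the justification behind the paper's terse ``By reality of $A$''; the parenthetical about $D(g_ig_j)=\emptyset$ is harmless but unnecessary, since the real-radical argument already covers that case uniformly.
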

\begin{proof}
We suppose that $A$ is an real ring. The case of $A$ being an integral domain is dealt with similarly or even simpler because of the fact that all the localizations are the subrings of the quotient filed of $A$. We define a map $\psi\colon\sum_{f}^{-1} A\rightarrow\OO_X(D(f))$ by sending $\frac{a}{f^{2n}+\sum x^2}$ to $s\in\OO_X(D(f))$ where $s(\p)=\frac{a}{f^{2n}+\sum x^2}$ for all $\p\in D(f)$. Obviously $\psi$ is homomorphism. We first show that $\psi$ is injective. {\em Note that for injectivity we do not need $A$ is a real ring or integral domain.} Let
$\psi(\frac{a}{f^{2n}+\sum x^2})=\psi(\frac{b}{f^{2m}+\sum y^2})=s\in\OO_X(D(f))$, then for any $\p\in D(f)$, both $\frac{a}{f^{2n}+\sum x^2}$ and $\frac{b}{f^{2m}+\sum y^2}$ are equal to $s(\p)\in A_{\p}$. Hence there is $h\notin\p$ such that
\[
h\big{[}a(f^{2m}+\sum y^2)-b(f^{2n}+\sum x^2)\big{]}=0.
\]
Let $I$ be the annihilator of $a(f^{2m}+\sum y^2)-b(f^{2n}+\sum x^2)$, then $h\in I$, $h\notin\p$ so $I\nsubseteq\p$. This holds for any $\p\in D(f)$, so we conclude that $V(I)\cap D(f)=\O$. Therefore $f\in\sqrt[R]{I}$, thus there are $k\in\mathbb{N}$ and $\sum z^2$ in $A$ such that
\[
(f^{2k}+\sum z^2)\big{[}a(f^{2m}+\sum y^2)-b(f^{2n}+\sum x^2)\big{]}=0.
\]
Observing that $(f^{2k}+\sum z^2)\in\sum_{f}$,  we get $\frac{a}{f^{2n}+\sum x^2}=\frac{b}{f^{2m}+\sum y^2}$ in $\sum_{f}^{-1} A$. Hence $\psi$ is injective.

Let's show that $\psi$ is surjective. Let $s\in\OO_X(D(f))$, by Basic Lemma there are $g_1,\dots,g_n\in A$ such that $D(f)=D(g_1)\cup\cdots\cup D(g_n)$ and $s\upharpoonright D(g_i)=\frac{a_i}{g_i}$ for some $a_i\in A$. So for any $\p\in D(g_ig_j)=D(g_i)\cap D(g_j)$, there is $h_{\p}\notin\p$ such that $h_{\p}(a_ig_j-a_jg_i)=0$ in $A$. Let $I$ be the annihilator of $a_ig_j-a_jg_i$, so $h_{\p}\in I$, $h_{\p}\notin\p$ thus $\p\nsupseteq I$ and this holds for any $\p\in D(g_ig_j)$, therefore we have $V(I)\cap D(g_ig_j)=\O$ from which it follows that $g_ig_j\in\sqrt[R]{I}$. So there are $m_{ij}\in\mathbb{N}$ and $\sum^{ij}x^2$ in $A$ such that
\[
\big{[}(g_ig_j)^{2m_{ij}}+\sum\nolimits^{ij}x^2\big{]}(a_ig_j-a_jg_i)=0.
\]
since $A$ is a real ring there are $n_{ij}\in\mathbb{N}$ such that
\[
(g_ig_j)^{n_{ij}}(a_ig_j-a_jg_i)=0.
\]
So taking $m$ larger than all $n_{ij}$ we get $(g_ig_j)^m(a_ig_j-a_jg_i)=0$ for all $i,j\in\{1,\dots,n\}$. Rewriting as
\[
g_{j}^{m+1}(g_i^ma_i)-g_{i}^{m+1}(g_j^ma_j)=0
\]
and then replacing each $g_i$ by $g_i^{m+1}$, and $a_i$ by $g_i^ma_i$, we still have $s$ represented on on $D(g_i)$ by $\frac{a_i}{g_i}$ and furthermore, we have $g_ia_j=g_ja_i$ for all $i,j\in\{1,\dots,n\}$. On the other hand having $V((f))=\bigcap^{n}_{i=1}V((g_i))=V(\sum^{n}_{i=1}(g_i))$ we get $f\in\sqrt[R]{\sum^{n}_{i=1}(g_i)}$ and it follows that there are $b_1,\dots,b_k$ in $A$ such that $b_1g_1+\cdots+b_ng_n=f^{2k}+\sum x^2$ for some $k$. Now let $a=a_1b_1+\cdots+a_nb_n$. Then for each $j$ we have
\[
g_ja=\sum_{i}b_ia_ig_j=\sum_{j}b_ig_ia_j=(f^{2k}+\sum x^2)a_j,
\]
which means that $\frac{a}{f^{2k}+\sum x^2}=\frac{a_j}{g_j}$ on $D(g_j)$. So $\psi(\frac{a}{f^{2k}+\sum x^2})=s$ everywhere, which shows that $\psi$ is surjective, hence an isomorphism.
\end{proof}

{\bf Acknowledgment.} The research of the author was in part supported by a grant from IPM (No. 99030403).

\bibliography{reference}
\bibliographystyle{plain}
\end{document}